\newcommand{\finpreuve}{\mbox{} \hfill \mbox{$\Box$}}
\def\Q{{\mathbb Q}}
\def\Z{{\mathbb Z}}
\def\fq{{\mathbb F}}
\def\p{{\mathfrak p}}
\def\A{{\mathfrak A}}
\def\R{{\rm R}}
\def\G{{\rm G}}
\def\K{{\rm K}}
\def\L{{\rm L}}
\def\F{{\rm F}}
\def\V{{\rm V}}
\def\H{{\rm H}}
\def\Gal{{\rm Gal}}
\def\1{{\bf 1}}
\newtheorem*{Theorem}{Theorem}
\author{Farshid Hajir, Christian Maire, Ravi Ramakrishna}
\address{Department of Mathematics, University of Massachussetts, Amherst, MA 01003, USA}
 \address{FEMTO-ST Institute, Universit\'e Bourgogne Franche-Comt\'e, CNRS,  15B avenue des Montboucons, 25000 Besan\c con, FRANCE} 
\address{Department of Mathematics, Cornell University, Ithaca, USA}
\email{hajir@math.umass.edu, christian.maire@univ-fcomte.fr, ravi@math.cornell.edu}
\begin{document}

\date{\today}

\title{Infinite  class field towers of number fields of prime power discriminant}

\begin{abstract}  For every prime number $p$,  we show  the existence of a solvable number field $\L$ ramified only at $\{p, \infty\}$ whose $p$-Hilbert Class field tower is
infinite.
\end{abstract}

\thanks{We all thank Mathematisches Forschungsinstitut Oberwolfach for sponsoring a ``Research in Pairs'' stay during which this work was done. 
The second author  was partially supported by the ANR project FLAIR (ANR-17-CE40-0012) and  by the EIPHI Graduate School (ANR-17-EURE-0002). The third author was supported by Simons collaboration grant 524863.}

\maketitle

\subjclass{ }

For a number field $\rm L$ of degree $n$ over $\Q$, the root discriminant is defined to be $D_{\rm L}^{1/n}$ where $D_{\rm L}$ is the absolute value of the discriminant of $\rm L$. Given a finite set $S$ of places of $\Q$, it is an old question as to whether there is an infinite sequence of number fields unramified outside $S$ with bounded root discriminant.  This question is related to the constants of Martinet \cite{Martinet} and Odlyzko's bounds \cite{Odlyzko}. Since the root discriminant is constant in unramified extensions, an approach to answering the previous question in the positive is to find a number field $\rm L$ (of finite degree) unramified outside $S$ having an infinite class field tower. 
In the  case of $\K/\Q$ quadratic, it is a classical result of Golod and Shafarevich that if  $\K/\Q$ is ramified at at least $ 8$ places, then $\K$ has
an infinite class field tower. On the other hand, if $p$ is a prime, and $S=\{p,\infty\}$, the question becomes whether there exist number fields with $p$-power discriminant having an infinite unramified extension.
Schmitals \cite{Schmithals} and  Schoof \cite{Schoof}
 produced a few isolated examples of this type.  
 See also \cite{Hajir-Maire-JA}, \cite{Leshin}, etc.
For $p\in \{2,3,5\}$, Hoelscher \cite{Hoelscher} announced the existence of number fields unramified outside $\{ p, \infty\}$  and having an infinite Hilbert class field tower.
Here  we prove:

\begin{Theorem}\label{Theorem:main}
For every prime number $p$, there exists a solvable extension  $\rm L/\Q$, ramified only at $\{p,\infty\}$, having an infinite Hilbert $p$-class field tower. Consequently, there exists an infinite nested sequence of number fields of $p$-power discriminant with bounded root discriminant.
\end{Theorem}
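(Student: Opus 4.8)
The target is the Golod--Shafarevich bound for the Hilbert $p$-class field tower: if $\L$ is a number field, $d:=\dim_{\mathbb{F}_p}\Cl_\L/p$ and $r_1,r_2$ are its numbers of real and complex places, and $d\ge 2+2\sqrt{r_1+r_2+1}$, then the $p$-class field tower of $\L$ is infinite. I would build, for each $p$, a solvable $\L/\Q$ ramified only at $\{p,\infty\}$ satisfying this bound. Since $\L$ will be taken inside a pro-$p$ extension of $K=\Q(\zeta_p)$ it will be totally imaginary, so $r_1=0$, $r_2=[\L:\Q]/2$, and it suffices to reach $d$ of the order of $\sqrt{2[\L:\Q]}$ — only square-root growth of the $p$-rank of the class group in the degree is needed, which leaves considerable room. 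The real difficulty is that the cheap mechanism for growing $p$-class groups is \emph{unavailable}: any number field ramified only at $\{p,\infty\}$ has a \emph{single} prime above $p$ (its inertia field at $p$ is unramified everywhere, hence equal to $\Q$), so Chevalley's ambiguous class number formula and genus theory in a $\Z/p$-layer give no lower bound at all. The $p$-rank must therefore be forced to grow by a more global device.

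The plan is to work instead with the maximal pro-$p$ extension unramified outside $p$. Over $K=\Q(\zeta_p)$ the group $G:=\Gal(K_{S_p}(p)/K)$ satisfies $\sum_{v\mid p}[K_v:\Q_p]=p-1$, and by the Shafarevich--Koch formulas for $S_p$-ramified pro-$p$ groups together with Golod--Shafarevich, $G$ is \emph{infinite} as soon as $p$ is not too small; the finitely many small $p$ (e.g. $p\in\{2,3,5\}$) would be handled by the same scheme over a slightly larger explicit base field inside $\Q(\zeta_{p^2})$, or by citing the known isolated examples. One then climbs a solvable tower $K=\L_0\subset\L_1\subset\cdots$, each $\L_{i+1}/\L_i$ abelian and unramified outside $p$ (so every $\L_i/\Q$ is solvable and ramified only at $\{p,\infty\}$), and transfers the bigness of $G$ into the abelian quotients. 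Concretely, over each $\L_i\ni\zeta_p$ one uses Kummer theory of $\p$-units: a subspace of $\p$-units that are locally $p$-th powers at the unique prime above $p$ produces an abelian extension of $\L_i$ unramified \emph{everywhere}, hence sitting inside the Hilbert $p$-class field; the heart of the matter is to show, using the infiniteness/Golod--Shafarevich input for $G$ (equivalently, a relation count via Poitou--Tate for $\Gal(\L_{i,S_p}(p)/\L_i)$, an open subgroup of $G$), that such locally split $\p$-units exist in abundance at every stage, so that $\dim_{\mathbb{F}_p}\Cl_{\L_i}/p\to\infty$ — and, with care in bounding the degree growth, grows faster than $\sqrt{2[\L_i:\Q]}$.

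The main obstacle is exactly this transfer. One must rule out that, although $G$ is infinite, the class ranks $\dim_{\mathbb{F}_p}\Cl_{\L_i}/p$ stay bounded while all of the tower's bigness hides in the inertia at $p$ (which is precisely what happens when $K$ is $p$-rational). What is needed is a quantitative incompatibility between Golod--Shafarevich for $G$ and bounded class rank in the $S_p$-ramified tower: if the $\Cl_{\L_i}$ had bounded $p$-rank, the presentations of the open subgroups $\Gal(\L_{i,S_p}(p)/\L_i)\le G$ would be governed by purely local data at $p$, and one could extract an upper estimate on $G$ contradicting Golod--Shafarevich. Carrying this out — selecting the abelian layers so that the required locally split $\p$-units materialize, and tracking the growth of $[\L_i:\Q]$ against the gain in class rank — is the technical core. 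Once some $\L=\L_m$ satisfies $\dim_{\mathbb{F}_p}\Cl_\L/p\ge 2+2\sqrt{[\L:\Q]/2+1}$, Golod--Shafarevich yields the infinite Hilbert $p$-class field tower; the ``Consequently'' clause is then immediate, since the layers of that tower form an infinite nested sequence of number fields whose discriminant remains a power of $p$ and whose root discriminant is therefore constant, hence bounded.
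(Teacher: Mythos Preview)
Your proposal is a plan rather than a proof: you explicitly flag ``the technical core'' and ``the main obstacle'' --- forcing $\dim_{\fq_p}\Cl_{\L_i}/p$ to outpace $\sqrt{[\L_i:\Q]}$ along some tower inside $\K_S/\K$ --- and then do not resolve it. The hoped-for ``quantitative incompatibility'' (that bounded class rank along the tower would contradict Golod--Shafarevich for $\G_S$) is not something one can extract from the mere infiniteness of $\G_S$; an infinite $\G_S$ can be carried entirely by wild inertia, exactly the $p$-rational phenomenon you yourself mention, and nothing in your sketch provides a device to separate the unramified part of the tower from the wild part.

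There is also a concrete error, and its failure is precisely what the paper exploits. Your assertion that every number field ramified only at $\{p,\infty\}$ has a \emph{single} prime above $p$ is false: your argument conflates the inertia field of one chosen prime $P\mid p$ with the maximal subextension unramified at $p$; the former can still be ramified at the \emph{other} primes above $p$. The paper takes $\K=\Q(\zeta_{p^s})$ with $h_\K\ge 8$ (which holds once $\varphi(p^s)>220$, so $s\ge 9$ works for every $p$) and passes to its Hilbert class field $\H$; since $(1-\zeta_{p^s})$ is principal it splits completely in $\H$, so over $\H$ there are $g=h_\K\ge 8$ primes above $p$ and the class number is $1$. Over $\H$ the paper does \emph{not} try to grow class rank. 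Instead it \emph{cuts} $\G_S=\Gal(\H_S/\H)$: first by the normal closure of all local commutators $[z_{i,v},z_{j,v}]$ for $v\mid p$ (at most $g\binom{ef+2}{2}$ new relations, all of depth $\ge 2$), obtaining the maximal everywhere-locally-abelian quotient $\Gamma$, whose Golod--Shafarevich polynomial $1-dt+\bigl(g-1+g\binom{ef+2}{2}\bigr)t^2$ with $d=g(ef/2+1)$ takes a negative value on $]0,1[$ as soon as $g\ge 8$; then by all $z_{i,v}^{p^k}$ for $k\gg 0$, which adds only $g(ef+2)$ relations of depth $\ge p^k$ and hence keeps the quotient $\Gamma_k$ infinite. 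In $\Gamma_k$ every decomposition group at $p$ is abelian of exponent dividing $p^k$, hence finite, so a finite layer $\L/\H$ inside the fixed field absorbs all the ramification and $\L$ acquires an infinite everywhere-unramified pro-$p$ extension. This cutting step is the missing idea in your plan: it is exactly the mechanism that prevents the tower's bigness from ``hiding in the inertia at $p$''.
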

Our proof is based on the idea of cutting of wild towers introduced in \cite{HMR}; in particular it does not involve the usual technique  of genus theory. For the more refined question where $S$ consists of a single prime number $p$ (i.e. if we focus our attention on totally real fields only), we do not know whether for every prime $p$, there is a totally real number field of $p$-power discriminant having an infinite Hilbert class field tower. In \cite[Corollary 4.4]{Schoof} it is shown that $\Q(\sqrt{39345017})$ (which is ramified only at the prime
$39345017$) has infinite Hilbert class field tower. In \cite{Shanks}, Shanks  studied primes of the form $p=a^2+3a+9$ and the corresponding totally real
cubic subfields $\K \subset \Q(\mu_p)$ and
 showed the minimal polynomials of $\K$ are $x^3-ax^2-(a+3)x-1$. 
Taking $a=17279$ so
 $p=298615687$, one can compute that  the $2$-part of the class group of $\K$  has rank $6$. It is not hard to see, using the Golod-Shafarevich criterion, that $\K$ has infinite
$2$-Hilbert class field tower.
Thus some examples exist in the totally real case.

\section{The results we need}

Let $p$ be a prime number. Let $\K/\Q$ be a finite  Galois extension. Assume $\mu_p \subset \K$ and moreover that $\K$ is totally imaginary when $p=2$.  
 For a prime $\p$ of $\K$ dividing  $p$ denote by $e$ (resp. $f$) the ramification index (resp. the residue degree) of $\p$ in $\K/\Q$.

\subsection{On the group $\G_S$}
Denote by $S$ the set of places of $\K$ above $p$, and consider $\K_S$ the maximal pro-$p$ extension of $\K$ unramified outside~$S$; put $\G_S=\Gal(\K_S/\K)$. Let $g=|S|$ be the number of places of $\K$ above $p$. 

Let $h_\K'$ be the $S$-class number of $\K$. By class field theory, $h_\K'$ is equal to $[\K':\K]$ where $\K'/\K$ is the maximal abelian of $\K$ unramified everywhere  in which all places  of $S$ split completely. The Kummer radical of the $p$-elementary subextension  $\K'(p)/\K$ of $\K'/\K$ is 
$$\V_S:=\{x\in \K^\times \mid x{\mathcal O}_\K=\A^p, \ x \in \K_v^{\times p}, \forall v \in S\}.$$ In particular  $p\nmid h_\K'$ if and only if  $\V_S/\K^{\times p}$ is trivial.

\medskip

By work of Koch and Shafarevich  the pro-$p$ group $\G_S$ is finitely presented. More precisely, in our situation  one has:

\begin{Theorem} \label{Theorem:rappel} Let $\K/\Q$ be a totally imaginary Galois extension containing $\mu_p$. Let $S=\{p,\infty\}$.  Then
$$\dim H^1(\G_S,\fq_p)=\frac{efg}{2}+1 + \dim  H^2(\G_S,\fq_p) $$
and $$\dim H^2(\G_S,\fq_p) = g-1 +  \dim \V_S/\K^{\times p}.$$
\end{Theorem}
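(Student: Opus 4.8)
**The plan is to compute the cohomology of $\G_S$ via the Koch–Shafarevich theory of pro-$p$ groups attached to restricted ramification.** The key input is that $\G_S = \Gal(\K_S/\K)$ is the Galois group of the maximal pro-$p$ extension of $\K$ unramified outside $S$, and for such groups one has explicit formulas for the minimal number of generators $d(\G_S) = \dim_{\fq_p} H^1(\G_S, \fq_p)$ and the minimal number of relations $r(\G_S) = \dim_{\fq_p} H^2(\G_S, \fq_p)$. These are governed by arithmetic of $\K$: a Shafarevich-type formula expresses $H^1$ in terms of $S$-units and local contributions, and $H^2$ in terms of the $p$-rank of a certain $S$-ray class group together with a $\mu_p$-Kummer correction. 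I would cite Koch's book (or Neukirch–Schmidt–Wingberg, \emph{Cohomology of Number Fields}, Ch. VIII/X) for the general statements and then specialize to $\K$ totally imaginary, $S = \{p,\infty\}$.

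First I would recall the general formula. For $\K_S/\K$ the maximal pro-$p$ extension unramified outside $S$ with $\mu_p \subset \K$, one has
$$\dim H^2(\G_S,\fq_p) = \dim H^1(\G_S,\fq_p) - \big(\dim H^0(\G_S,\fq_p) - \chi\big),$$
where $\chi$ is the global Euler–Poincaré characteristic. The Euler–Poincaré characteristic formula for $\G_S$ (Tate) gives $\chi = \dim H^0 - \dim H^1 + \dim H^2 = -\sum_{v \in S, v \mid \infty} \delta_v + \dots$; more usefully, for $S$ containing all places above $p$ and all archimedean places, the generator count is
$$\dim H^1(\G_S,\fq_p) = \dim \V_S/\K^{\times p} + \theta + \sum_{v\in S} \delta_v,$$
where $\delta_v = \dim \mu_p(\K_v)$ is $1$ since $\mu_p \subset \K$, and $\theta$ accounts for units. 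The second formula for $H^2$ is a direct consequence of Poitou–Tate duality, identifying $H^2(\G_S,\fq_p)$ with a Shafarevich group whose dimension is $g - 1 + \dim \V_S/\K^{\times p}$: the $g-1$ comes from the local degrees at the $g$ places of $S$ (one relation per place beyond the first, reflecting that the idele class group sees $\prod_{v\in S} \fq_p / \fq_p$), and the $\V_S$-term is precisely the obstruction measuring failure of the local-to-global principle, i.e. the $p$-part of the $S$-split Hilbert class group, via the Kummer description already given in the excerpt.

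For the first displayed formula I would then substitute. With $\K$ totally imaginary of degree $n = efg$ over $\Q$ (since there are $g$ primes above $p$, each with $ef$ local degree, summing to $n$), the number of complex places is $n/2 = efg/2$, and the unit rank of $\O_\K^\times$ is $r_1 + r_2 - 1 = n/2 - 1 = efg/2 - 1$. Dirichlet plus the $\mu_p \subset \K$ torsion gives $\dim(\O_\K^\times/\O_\K^{\times p}) = efg/2$. Feeding this into the Shafarevich generator formula, together with the relation-count formula for $H^2$, yields
$$\dim H^1(\G_S,\fq_p) = \frac{efg}{2} + 1 + \dim H^2(\G_S,\fq_p),$$
after the local $\delta_v$ terms (one per place above $p$, giving $g$) combine with the global $H^0$ term and the $g-1$ from the $H^2$ formula; the bookkeeping is exactly the standard one and I would present it as a short computation. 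The one point requiring genuine care — and the \textbf{main obstacle} — is tracking the archimedean contribution and the normalization of the Shafarevich group when $p = 2$: the hypothesis that $\K$ is totally imaginary is there precisely to kill the real-place defects ($H^*$ of $\Gal(\Com/\Reel)$ with $\fq_2$-coefficients is nontrivial), so I would check that under this hypothesis the $p=2$ case behaves uniformly with odd $p$, which is why both formulas hold with no case distinction. Everything else is a direct appeal to Poitou–Tate duality and the Euler–Poincaré characteristic, so the proof is essentially an unwinding of known formulas in the stated special case.
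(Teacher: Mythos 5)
Your proposal is correct and follows essentially the same route as the paper: the paper's proof is simply a citation of the standard Shafarevich/Euler--Poincar\'e and Poitou--Tate results (NSW, Corollary 8.7.5 and Theorem 10.7.3), which are exactly the formulas you unwind, with $r_2=efg/2$ and the totally imaginary hypothesis handling $p=2$. Your sketch is just a more explicit (if slightly loosely stated) version of the same well-known bookkeeping.
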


\begin{proof} This is well-known, see for example \cite[Corollary 8.7.5 and Theorem 10.7.3]{NSW}.
\end{proof}

We immediately have:

\begin{coro} \label{coro:main}
If $p\nmid h_\K$ then $\dim H^1(\G_S,\fq_p)= g(\frac{ef}{2}+1)$ and $\dim H^2(\G_S,\fq_p)=g-1 $.
\end{coro}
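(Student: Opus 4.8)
The plan is to deduce the corollary directly from Theorem~\ref{Theorem:rappel} by showing that the hypothesis $p \nmid h_\K$ forces $\dim \V_S/\K^{\times p} = 0$. Indeed, $\V_S$ is by construction a subgroup of the Kummer radical attached to the maximal elementary abelian unramified extension of $\K$, so $\dim \V_S/\K^{\times p}$ is bounded above by the $p$-rank of the class group $\Cl_\K$; if $p \nmid h_\K$ this class group has trivial $p$-part, hence $\V_S/\K^{\times p}$ is trivial. One can see this concretely: if $x \in \V_S$, then $x\O_\K = \A^p$, and the class $[\A] \in \Cl_\K$ is $p$-torsion, so $p \nmid h_\K$ forces $\A$ to be principal, say $\A = (\alpha)$; then $x = u\alpha^p$ for a unit $u$, and $x \equiv u \pmod{\K^{\times p}}$, so the image of $\V_S$ in $\K^\times/\K^{\times p}$ is contained in the image of the units. (In fact one gets slightly more, but triviality of the class group $p$-part is all that is needed here — the full statement that $\V_S \subset \K^{\times p}$ follows once one also notes $x$ is a local $p$-th power at $p$; but for the corollary only $\dim \V_S/\K^{\times p} = 0$ matters, and this already follows from $[\A]$ being principal combined with the local conditions, or more crudely is implied by the hypothesis via the subextension $\K'$.)

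Next I would feed this into the two displayed formulas of Theorem~\ref{Theorem:rappel}. With $\dim \V_S/\K^{\times p} = 0$ the second formula immediately gives $\dim H^2(\G_S, \fq_p) = g - 1$. Substituting this into the first formula yields
$$\dim H^1(\G_S,\fq_p) = \frac{efg}{2} + 1 + (g-1) = \frac{efg}{2} + g = g\left(\frac{ef}{2} + 1\right),$$
which is exactly the asserted value. This is a purely formal manipulation once the vanishing of $\V_S/\K^{\times p}$ is in hand.

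The only genuine content is therefore the reduction $p \nmid h_\K \Rightarrow \dim \V_S/\K^{\times p} = 0$, and I expect this to be the main (though modest) obstacle to write cleanly: one must correctly identify $\V_S/\K^{\times p}$ with the Kummer radical of $\K'(p)/\K$ as recalled in the excerpt, and recall that $\K'(p)/\K$ is contained in the genus-type / Hilbert class field data, so that $[\K'(p):\K]$ divides a power of $h_\K$ (indeed of $h'_\K$, and $h'_\K \mid h_\K$). Since $p \nmid h_\K$ this degree is $1$, i.e. $\V_S = \K^{\times p}$, as claimed. No deep input is required — the corollary is really just Theorem~\ref{Theorem:rappel} specialized to the case where the ``defect term'' $\dim \V_S/\K^{\times p}$ disappears — so the write-up should be only a few lines.
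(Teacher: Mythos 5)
Your proposal is correct and is essentially the paper's own (implicit) argument: since the $S$-class number $h'_\K$ divides $h_\K$, the hypothesis gives $p\nmid h'_\K$, hence $\V_S/\K^{\times p}$ is trivial by the equivalence recalled just before Theorem~\ref{Theorem:rappel}, and substituting $\dim \V_S/\K^{\times p}=0$ into that theorem's two formulas yields $\dim H^2(\G_S,\fq_p)=g-1$ and $\dim H^1(\G_S,\fq_p)=\frac{efg}{2}+1+(g-1)=g\left(\frac{ef}{2}+1\right)$. The only blemish is your ``concrete'' aside, which by itself only places $\V_S$ inside the units times $\K^{\times p}$; the clean justification you also give, via the identification of $\V_S/\K^{\times p}$ with the Kummer radical of $\K'(p)/\K$ and $[\K'(p):\K]\mid h'_\K\mid h_\K$, is the one to rely on.
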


\subsection{The cutting towers strategy}

\subsubsection{The Golod-Shafarevich Theorem}

Let $\G$ be a finitely generated pro-$p$ group. Consider a minimal presentation  $1\rightarrow \R \rightarrow \F\stackrel{\varphi}{\rightarrow} \G$ of $\G$, where $\F$ is a  free pro-$p$ group. Set $d=d(\G)=d(\F)$, the number of generators of $\G$ and $\F$.
 Suppose that $\R=\langle \rho_1,\cdots, \rho_r \rangle^{Norm}$ is generated   as normal subgroup of $\F$ by a finite set of relations  $\rho_i$.

We recall the depth function $\omega$ on $\F$. See \cite[Appendice]{Lazard} or \cite{Koch}  for more details.
The augmentation ideal $I$  of $\fq_p [[\G ]]$ is, by definition,  
generated by the set of elements $\{g-e\}_{g\in \G}$.
Then for $e\neq g\in \F$, define 
$\omega(g)=\max_k \{ g-e \in I^k\}$; put $\omega(e)= \infty$. It is not difficult to see that $\omega([g,g']) \geq 2$ and that $\omega(g^{p^k}) \geq p^k$ for every $g,g' \in \G$ and $k\in \Z_{>0}$.
Observe also that as the presentation $\varphi$ is minimal,  $\omega(\rho_i) \geq 2$ for all the relations $\rho_i$.

\medskip

The  Golod-Shafarevich polynomial associated to the presentation $\varphi$ of $\G$ is the polynomial $P_\G(t)=1-dt + \sum_i t^{\omega(\rho_i)}$.

\begin{Theorem}[Golod-Shafarevich, Vinberg \cite{Vinberg}]
If $\G$ is finite then $P_\G(t) >0$ for all $t\in ]0,1[$.
\end{Theorem}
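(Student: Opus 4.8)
The plan is to follow the classical argument of Golod and Shafarevich as streamlined by Vinberg, working entirely in the completed group algebra $A=\fq_p[[\G]]$ with its augmentation ideal $I$. First I would record the basic algebraic input: since $\G$ admits the minimal presentation $1\to\R\to\F\to\G\to 1$ with $d$ generators and relations $\rho_1,\dots,\rho_r$ of depths $\omega(\rho_i)\geq 2$, the graded ring $\mathrm{gr}(A)=\bigoplus_k I^k/I^{k+1}$ is a quotient of the free (noncommutative) power series ring on $d$ variables modulo the ideal generated by the initial forms of the $\rho_i$, one in degree $\omega(\rho_i)$ for each $i$. Concretely, writing $c_k=\dim_{\fq_p} I^k/I^{k+1}$, the Golod--Shafarevich inequality states that for all $k$,
\[
c_k \geq d\,c_{k-1} - \sum_{i} c_{k-\omega(\rho_i)},
\]
with the convention $c_0=1$ and $c_j=0$ for $j<0$. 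This is the heart of the matter and the step I expect to be the main obstacle to write cleanly: it requires exhibiting, for each degree $k$, enough linearly independent elements of $I^k/I^{k+1}$ by lifting a basis of $I^{k-1}/I^{k}$ through multiplication by the $d$ generators and using that the relations only kill a controlled number of them in each degree. One proves it by a careful bookkeeping of a minimal generating set of $I$ as an $A$-module together with the exact sequence coming from the presentation; Vinberg's treatment organizes this via the Hilbert series of the associated graded algebra.

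Granting the inequality, the rest is a formal manipulation with generating functions. Suppose for contradiction that $\G$ is finite. Then $A=\fq_p[[\G]]=\fq_p[\G]$ is a finite-dimensional $\fq_p$-algebra, the $I$-adic filtration is finite (since $I$ is nilpotent, $\G$ being a finite $p$-group), so $c_k=0$ for all $k$ large enough and the power series $H(t)=\sum_{k\geq 0} c_k t^k$ is in fact a polynomial with nonnegative integer coefficients, with $H(0)=c_0=1$ and $\sum_k c_k = \dim_{\fq_p}\fq_p[\G] = |\G| \geq 1$. Now I would multiply the Golod--Shafarevich inequalities $c_k - d\,c_{k-1} + \sum_i c_{k-\omega(\rho_i)} \geq 0$ by $t^k$ and sum over $k\geq 1$. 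The left-hand side telescopes into $H(t)\big(1 - dt + \sum_i t^{\omega(\rho_i)}\big) - 1 = H(t)P_\G(t) - 1$. Hence for every $t\in\,]0,1[$ one gets $H(t)P_\G(t) \geq 1$, in particular $P_\G(t) \geq 1/H(t) > 0$ since $H(t)>0$ there (all $c_k\geq 0$ and $c_0=1$). This is exactly the assertion $P_\G(t)>0$ on $]0,1[$, completing the proof.

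A few points of care in writing this up: one must justify that the series identity $\sum_{k\geq 1}\big(c_k - dc_{k-1} + \sum_i c_{k-\omega(\rho_i)}\big)t^k = H(t)P_\G(t)-1$ holds as an identity of polynomials (or convergent series on $]0,1[$) — this is immediate once $H$ is a polynomial — and that the convention $c_j=0$ for $j<0$ makes the coefficient of $t^k$ on the left match up for small $k$ as well, using $\omega(\rho_i)\geq 2$ so that no relation term contributes to the coefficient of $t^1$, which is why the $-dt$ term appears unmodified. The genuinely nontrivial ingredient remains the degreewise inequality on $c_k$; everything downstream of it is the elementary generating-function argument above, and I would present the inequality either with a self-contained proof via minimal free resolutions of $\fq_p$ over $\mathrm{gr}(A)$ or simply by citing Vinberg \cite{Vinberg} and Koch \cite{Koch} as the excerpt already does.
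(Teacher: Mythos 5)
The paper offers no proof of this statement beyond the citation to Vinberg, and your argument is exactly the classical Golod--Shafarevich--Vinberg proof: the degreewise inequality $c_k \ge d\,c_{k-1}-\sum_i c_{k-\omega(\rho_i)}$ for $c_k=\dim I^k/I^{k+1}$ in $\fq_p[[\G]]$, then the correct observation that finiteness of $\G$ makes $H(t)=\sum_k c_k t^k$ a positive polynomial on $]0,1[$ and yields $H(t)P_\G(t)\ge 1$, hence $P_\G(t)>0$. Your handling of the conventions (in particular $\omega(\rho_i)\ge 2$ and $c_1=d$ from minimality) is sound, and deferring the key inequality to Vinberg/Koch matches the paper's own level of detail, so the proposal is correct and takes essentially the same route (the only caution, should you attempt the self-contained variant: $\mathrm{gr}\,\fq_p[[\G]]$ is merely a quotient of the algebra presented by the initial forms of the $\rho_i$, so Vinberg's graded inequality does not formally descend to it, and the standard proofs instead argue directly with the filtered algebra via the presentation sequence).
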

Of course if we have no information about the $\rho_i$'s we may take $1-dt +rt^2$ 
(where $r=\dim H^2(G,\fq_p)$)
as Golod-Shafarevich polynomial for $\G$: if  $1-dt+rt^2$ is negative
at $t_0 \in ]0,1[$, then  $P_\G(t_0) < 0$ and $\G$ is infinite.

\medskip
 
We can also define a depth function $\omega_\G$ on $\G$ associated to its augmentation ideal. Then:
 
 \begin{prop} For every $g \in \G$, one has $$\omega_\G(g)=\max\{ \omega(y), \varphi(y)=g\}.$$ 
 \end{prop}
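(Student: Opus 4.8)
The plan is to prove the two inequalities in $\omega_\G(g)=\max\{\omega(y):\varphi(y)=g\}$ separately; the inequality $\omega_\G(g)\ge\max\{\omega(y):\varphi(y)=g\}$ is formal, while the reverse one is the heart of the matter.

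First I would record the ring-theoretic setup. The surjection $\varphi\colon\F\twoheadrightarrow\G$ extends to a continuous surjective homomorphism of $\fq_p$-algebras $\varphi_*\colon\fq_p[[\F]]\twoheadrightarrow\fq_p[[\G]]$ carrying the augmentation ideal $I$ of $\fq_p[[\F]]$ onto the augmentation ideal $I_\G$ of $\fq_p[[\G]]$, so that $\varphi_*(I^n)=I_\G^n$ for every $n\ge 1$. From this the easy inequality is immediate: if $\varphi(y)=g$ and $\omega(y)\ge n$, i.e. $y-e\in I^n$, then $g-e=\varphi_*(y-e)\in\varphi_*(I^n)=I_\G^n$, so $\omega_\G(g)\ge n$; letting $y$ run over all lifts of $g$ gives $\omega_\G(g)\ge\max\{\omega(y):\varphi(y)=g\}$. (The case $g=e$ is handled by $y=e$.)

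For the reverse inequality I need, given $g\in\G$ with $\omega_\G(g)=n<\infty$, to exhibit a single lift $y$ of $g$ with $\omega(y)\ge n$. The key input is the Jennings--Lazard description of the depth filtration: for any pro-$p$ group $G$ with augmentation ideal $\Delta$ one has $G\cap(1+\Delta^n)=D_n(G)$, where $D_n(G)=\prod_{i\,p^j\ge n}\gamma_i(G)^{p^j}$ is the $p$-central (Zassenhaus) filtration, defined purely in terms of commutators and $p$-th powers; this is precisely the content of \cite[Appendice]{Lazard} (see also \cite{Koch}). Because $D_n(G)$ is defined group-theoretically, a surjective homomorphism carries $\gamma_i(\F)^{p^j}$ onto $\gamma_i(\G)^{p^j}$, hence $\varphi(D_n(\F))=D_n(\G)$. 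Therefore $g\in\G\cap(1+I_\G^n)=D_n(\G)=\varphi(D_n(\F))$, so $g=\varphi(y)$ for some $y\in D_n(\F)=\F\cap(1+I^n)$, i.e. $\omega(y)\ge n=\omega_\G(g)$. Combined with the first inequality this gives equality, with the maximum attained at this $y$.

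I expect the main obstacle to be exactly the surjectivity statement $\varphi\bigl(\F\cap(1+I^n)\bigr)=\G\cap(1+I_\G^n)$. The inclusion ``$\subseteq$'' is the formal computation above, but ``$\supseteq$'' is not formal: trying to lift $g$ directly, one picks a lift $y_0$ and writes $y_0-e=z+w$ with $z\in I^n$ and $w\in\ker\varphi_*$, and then tries to absorb $w$ by multiplying $y_0$ on the right by a suitable $u\in\R$; this runs into the standard difficulty that the leading forms of a topological generating set of the ideal $\ker\varphi_*$ need not generate the corresponding ideal in the associated graded algebra $\bigoplus_{n}I^n/I^{n+1}$, so one cannot in general match $w$ modulo $I^{n+1}$ by an element $u-e$ coming from the relations $\rho_i-e$ alone. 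Routing instead through the group-theoretic characterisation of the filtration (Jennings--Lazard), together with the fact that such characterisations automatically commute with surjections, is what makes ``$\supseteq$'' work cleanly; note that minimality of the presentation plays no role.
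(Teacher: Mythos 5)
Your argument is correct. Both inequalities are handled properly: the inequality $\omega_\G(g)\ge\max\{\omega(y):\varphi(y)=g\}$ follows, as you say, from the fact that the induced map $\fq_p[[\F]]\to\fq_p[[\G]]$ sends $I^n$ into $I_\G^n$; and for the reverse direction your use of the Jennings--Lazard identification of $\F\cap(1+I^n)$ and $\G\cap(1+I_\G^n)$ with the Zassenhaus filtration subgroups $D_n(\F)$ and $D_n(\G)$, together with the (compactness) fact that a continuous surjection of pro-$p$ groups carries $D_n(\F)$ onto $D_n(\G)$, does produce a lift $y$ of $g$ with $\omega(y)\ge\omega_\G(g)$, and the first inequality then shows the maximum is attained there. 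The difference from the paper is one of exposition rather than substance: the paper gives no argument at all, simply citing Lazard's Appendice A3, Th\'eor\`eme 3.5, which asserts exactly that the filtration induced on a quotient is the quotient filtration; your proof reconstructs that statement from the dimension-subgroup theorem, which is itself the main content of the same appendix (and of the relevant chapter of Koch). So you have supplied the proof the paper outsources, at the cost of invoking Jennings--Lazard as a black box; your closing remarks --- that the naive attempt to absorb an element of $\ker\varphi_*$ by multiplying by an element of $\R$ founders on the graded-ideal issue, and that minimality of the presentation is irrelevant to this proposition --- are both accurate and consistent with the generality in which Lazard proves the result.
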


\begin{proof}
See \cite[Appendice 3, Theorem 3.5]{Lazard}.
\end{proof}

\medskip

We now study quotients  $\Gamma$ of $\G$ such that $d(\G)=d(\Gamma)$. In this case, the initial minimal presentation of $\G$ induces a minimal presentation of $\Gamma$
$$\xymatrix{1  \ar[r] & \R \ar[r] & \F \ar[r]^\varphi \ar@{.>>}[rd] & \G \ar[r] \ar@{->>}[d] & 1 \\
 & & & \Gamma &}$$
 
 Suppose that $\Gamma=\G/\langle x_1,\cdots, x_m\rangle^{Norm}$.
 Here $\langle x_1,\cdots, x_m\rangle^{Norm}$ is the normal subgroup of $\G$ generated by the $x_i$'s. Lift the $x_i$'s to  $y_i \in \F$ such that $\omega_\G(x_i)=\omega(y_i)$ for each $i$.
 Hence, $\Gamma=\F/\R'$, where $\R'=\R\langle y_1, \cdots, y_m\rangle^{Norm}$.
 In particular, if $\R=\langle \rho_1,\cdots, \rho_r \rangle^{Norm}$, then  $\R'=\langle \rho_1, \cdots, \rho_r, y_1, \cdots, y_m\rangle^{Norm}$.
 
 If we have no information about the $\rho_i$'s,  we can take $P_\Gamma(t)=1-dt+rt^2+\sum_i t^{\omega(y_i)}$ as Golod-Shafarevich polynomial for $\Gamma$.

\subsubsection{Cutting of $\G_S$} \label{section:cutting}

We want to consider some special quotients $\Gamma$ of $\G_S$, this is what we call ``cutting wild towers''. 
 
 \medskip
 Each place $v\in S$ corresponds to some extension  $\K_v/\Q_p$ (in fact these fields are isomorphic as $\K/\Q$ is Galois) of degree $ef$.
 Then, as $\mu_p \subset \K_v$,  the ${\mathbb F}_p$-vector space $\K^\times_v/\K^{\times p}_v$ has dimension $ef+2$,
and local class field theory implies the Galois group 
of the maximal pro-$p$ extension of $\K_v$ is generated by $ef+2$ elements.  
 Thus the decomposition subgroup $\G_v$ of $v$ in $\K_S/\K$ is 
 generated by at most $ef+2$ elements $z_{i,v}$. Consider now the  commutators $[z_{i,v},z_{k,v}] $ of all these elements; there are at most $\binom{ef+2}{2}$ such elements. Now we cut $\G_S$ by  $\langle [z_{i,v},z_{k,v}], i,k; v\in S\rangle^{Norm}$, and denote by $\Gamma$ the corresponding quotient.  As $\omega_{\G_S}([z_{i,v},z_{k,v}]) \geq 2$, one can take $P_\Gamma=1-dt+rt^2+g\binom{ef+2}{2} t^2$ as Golod-Shafarevich polynomial for $\Gamma$; here $d=\dim H^1(\G_S,\fq_p)$ and $r=\dim H^2(\G_S,\fq_p)$.
 This quotient  $\Gamma$ of $\G_S$  corresponds to the maximal subextension $\K_S^{loc-ab}/\K$ of $\K_S/\K$ locally abelian everywhere.
Observe that $\K_S^{loc-ab}/\K$ contains the compositum of all $\Z_p$-extensions. 

\medskip

Suppose that there exists some $t_0 \in ]0,1[$ such that $P_\Gamma(t_0)<0$.
We will then cut the infinite pro-$p$ group $\Gamma$ by all the  $z_{v,i}^{p^k}$ for some large $k$. 
There are $g(ef+2)$ such elements.
Denote by $\Gamma_k$ the new quotient and by $\K_S^{[k]}$ the new extension of $\K$ corresponding to $\Gamma_k$. 
Since $\omega_\Gamma(z_{v,i}^{p^k})\geq p^k$,
we may take $P_{\Gamma_k}(t)=P_\Gamma(t) + g(ef+2)t^{p^k}$ as the Golod-Shafarevich polynomial for $\Gamma_k$.
When $k$ is sufficiently large, clearly $P_{\Gamma}(t_0) < 0 \implies
P_{\Gamma_k}(t_0)<0$, so  $\K_S^{[k]}/\K$ is infinite.  

The main interest of $\K_S^{[k]}/\K$ is:

\begin{prop}\label{prop:basechange} Suppose $\K_S^{[k]}/\K$ infinite. Then 
there exists a finite  subextension $\rm L/\K$ of $\K_S^{[k]}/\K$ having an infinite Hilbert $p$-class field tower.
\end{prop}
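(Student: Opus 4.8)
The plan is to locate a finite layer $\L$ of the tower $\K_S^{[k]}/\K$ above which $\K_S^{[k]}$ becomes unramified at \emph{every} place; then $\K_S^{[k]}$ lies inside the maximal everywhere-unramified pro-$p$ extension of $\L$, which is consequently infinite, and this is exactly the assertion that the Hilbert $p$-class field tower of $\L$ is infinite.

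First I would record the local input furnished by the construction of \S\ref{section:cutting}. The group $\Gamma_k=\Gal(\K_S^{[k]}/\K)$ is a quotient of $\G_S$ in which, for each $v\in S$, the image $D_v$ of the decomposition group $\G_v$ is abelian (we have killed the commutators $[z_{i,v},z_{j,v}]$), of exponent dividing $p^k$ (we have killed the $z_{i,v}^{p^k}$), and generated by at most $ef+2$ elements; hence $D_v$, and a fortiori every inertia subgroup of $\Gamma_k$ at a place above $p$, is finite.

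Next I would choose a descending chain $\Gamma_k=G_0\supseteq G_1\supseteq\cdots$ of open normal subgroups with $\bigcap_n G_n=\{1\}$ --- possible because $\Gamma_k$ is a finitely generated pro-$p$ group --- and set $\L_n=(\K_S^{[k]})^{G_n}$, so the $\L_n/\K$ are finite Galois with $\bigcup_n\L_n=\K_S^{[k]}$. For a place $w$ of $\K_S^{[k]}$ above $p$, with inertia group $I_w\subseteq\Gamma_k$ and restriction $v\in S$ to $\K$, the inertia group in $\K_S^{[k]}/\L_n$ of the place of $\L_n$ below $w$ is $I_w\cap G_n$; since $I_w$ is finite this is trivial for $n$ large. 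Because the $G_n$ are normal and the inertia groups of $\Gamma_k$ at the places of $\K_S^{[k]}$ lying over a fixed $v$ are mutually conjugate, the threshold ``$n$ large'' can be taken uniformly over all $w$ above that $v$; and $S$ is finite, so a single $N$ works for all places above $p$ simultaneously. Put $\L:=\L_N$.

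Finally, at the level $N$ the extension $\K_S^{[k]}/\L$ is unramified above $p$ by the previous step, unramified away from $p$ because $\K_S/\K$ is, and unramified at the archimedean places (no pro-$p$ extension ramifies there when $p$ is odd; and $\L\supseteq\K$ is totally imaginary when $p=2$). Hence $\K_S^{[k]}/\L$ is an everywhere-unramified pro-$p$ extension, so $\K_S^{[k]}$ is contained in the maximal unramified pro-$p$ extension of $\L$; since $\Gal(\K_S^{[k]}/\L)=G_N$ is open in the infinite group $\Gamma_k$, that extension is infinite, i.e. $\L$ has infinite Hilbert $p$-class field tower. The only delicate point --- and the one real obstacle --- is the simultaneous disappearance of inertia at the a priori infinitely many primes above $p$; this is precisely what the normality of the $G_n$, together with the finiteness of $S$ and of the inertia groups coming from \S\ref{section:cutting}, provides. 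Everything else is formal.
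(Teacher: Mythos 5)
Your proposal is correct and follows essentially the same route as the paper: the cut decomposition groups at $v\in S$ are abelian, finitely generated and of exponent dividing $p^k$, hence finite, so a finite layer $\L$ of $\K_S^{[k]}/\K$ absorbs all ramification and $\K_S^{[k]}/\L$ is an infinite everywhere-unramified pro-$p$ extension. You merely spell out (via the chain of open normal subgroups and conjugacy of inertia) the step the paper states in one sentence, namely the existence of such an $\L$.
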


\begin{proof} In   $\K_S^{[k]}/\K$ the (wild) ramification is finite: indeed  for each $v\in S$, the decomposition groups in  $\K_S^{[k]}/\K$ are abelian, finitely generated  and of finite exponent.
 There exists a finite extension $\rm L/\K$ inside $\K_S^{[k]}/\K$ absorbing all the ramification, so
 $\K_S^{[k]}/\rm L$ is  unramified everywhere and infinite.
\end{proof}

\section{Proof}

\begin{prop}\label{prop:0} Let $\K/\Q$ be finite Galois with $\mu_p \subset \K$.  
Assume that 
$g\geq 8$.
Then there exists a finite subextension $\rm L/\K$ of $\K_S/\K$ such that the Hilbert $p$-class field tower of $\rm L$ is infinite.
\end{prop}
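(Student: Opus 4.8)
The plan is to feed $\G_S$ into the ``cutting of wild towers'' machinery of §\ref{section:cutting}: once the locally abelian quotient $\Gamma$ of $\G_S$, and then its finite‑exponent cut $\Gamma_k$, are shown to be infinite, Proposition~\ref{prop:basechange} produces the desired field ${\rm L}$ for free, so the whole proposition reduces to a single Golod–Shafarevich estimate in which $g\ge 8$ is the decisive input. First I would record the invariants. Our standing hypotheses force $\K$ to be totally imaginary, so Theorem~\ref{Theorem:rappel} applies: setting $n=ef$ and $r:=\dim H^2(\G_S,\fq_p)=g-1+\dim\V_S/\K^{\times p}$, we have $r\ge g-1$ and
$$d:=\dim H^1(\G_S,\fq_p)=\tfrac{ng}{2}+1+r.$$
Following §\ref{section:cutting}, let $\Gamma$ be the quotient of $\G_S$ obtained by killing the $g\binom{n+2}{2}$ commutators $[z_{i,v},z_{k,v}]$ of the generators of the local decomposition groups; since $d(\Gamma)=d(\G_S)=d$ and each such commutator has depth $\ge 2$, one may take $P_\Gamma(t)=1-dt+ct^2$ with $c:=r+g\binom{n+2}{2}$ as a Golod–Shafarevich polynomial for $\Gamma$.

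The heart of the argument is to show that $P_\Gamma$ is negative somewhere on $]0,1[$. Write $A=\tfrac{ng}{2}+1$ and $B=g\binom{n+2}{2}$, so $d=A+r$, $c=B+r$. One checks $B>A$ for every $n\ge 1$ (indeed $2B-2A=g(n^2+2n+2)-2>0$), hence $0<d=A+r<B+r\le 2c$, so the critical point $t_0:=d/2c$ lies in $]0,1[$ and $P_\Gamma(t_0)=1-\tfrac{d^2}{4c}$; it therefore suffices to prove $d^2>4c$. Now $(A+r)^2-4(B+r)=r^2+(2A-4)r+(A^2-4B)$ is a quadratic in $r$ with vertex at $r=2-A<0$ (since $A\ge 5$ when $g\ge 8$), hence increasing for $r\ge 0$; so it is enough to treat the extremal case $r=g-1$. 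There, with $m:=n+2\ge 3$, we get $d=\tfrac{gm}{2}$ and $c=\tfrac{gm(m-1)}{2}+g-1$, and the inequality $d^2>4c$ becomes
$$\frac{g(g-8)}{4}\,m^2+2g(m-2)+4>0,$$
which holds for every $g\ge 8$ and every $m\ge 3$ because both summands are then nonnegative and the second is strictly positive. Hence $P_\Gamma(t_0)<0$ with $t_0\in\,]0,1[$.

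To conclude I would invoke the final step of §\ref{section:cutting}: cutting $\Gamma$ further by the elements $z_{i,v}^{p^k}$ for $k$ large enough that $P_{\Gamma_k}(t_0)=P_\Gamma(t_0)+g(n+2)\,t_0^{p^k}<0$ gives an infinite quotient $\Gamma_k=\Gal(\K_S^{[k]}/\K)$, so $\K_S^{[k]}/\K$ is infinite, and Proposition~\ref{prop:basechange} then furnishes a finite subextension ${\rm L}/\K$ of $\K_S^{[k]}/\K$ with infinite Hilbert $p$-class field tower. The only real work is the numerical step, and the subtlety worth flagging is that enlarging $r$ — i.e.\ allowing a nontrivial contribution of $\V_S$ — only \emph{improves} the estimate, so the extremal case is genuinely $r=g-1$; this is exactly why the threshold $g\ge 8$ emerges, in parallel with the classical Golod–Shafarevich criterion for quadratic fields ramified at $\ge 8$ places.
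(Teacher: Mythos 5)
Your proof is correct, and its skeleton --- cut $\G_S$ by the local commutators, check that $1-dt+ct^2$ is negative at $t_0=d/2c$, then cut further by large $p^k$-th powers of the local generators and invoke Proposition~\ref{prop:basechange} --- is exactly that of the paper. The one genuine difference is the first step. The paper begins by replacing $\K$ with the top $\H$ of its Hilbert class field tower (if that tower is already infinite there is nothing to prove), so that $p\nmid h_{\H}$, Corollary~\ref{coro:main} applies, and the invariants are pinned down exactly: $d=g\left(\frac{ef}{2}+1\right)$ and $r=g-1$, with $e,f,g$ now the invariants of $\H$ and $g_{\H}\geq g_{\K}\geq 8$. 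You instead stay over $\K$, keep the unknown contribution $\dim \V_S/\K^{\times p}$ inside $r\geq g-1$, and dispose of it by observing that $(A+r)^2-4(B+r)$ is increasing in $r\geq 0$ (its vertex $2-A$ is negative), so the extremal case $r=g-1$ suffices; at that point your inequality $\frac{g(g-8)}{4}m^2+2g(m-2)+4>0$ is, up to the paper's replacement of $16(g-1)$ by $16g$, the same numerical estimate as the paper's $16+8(x+2)(x+1)<g(x+2)^2$ with $m=x+2$. What your route buys: no reduction step, hence no bookkeeping to transfer the hypotheses ($\H/\Q$ Galois, $\mu_p\subset\H$, $g\geq 8$, total imaginarity) to $\H$, no case split on whether $\H/\K$ is finite, and the field ${\rm L}$ you produce sits directly inside $\K_S$, which is literally what the proposition asserts. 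What the paper's reduction buys is exact generator and relation counts and a marginally simpler final inequality. One small point to keep explicit: for $p=2$ the total imaginarity of $\K$ needed for Theorem~\ref{Theorem:rappel} is not forced by $\mu_p\subset\K$ but is the standing hypothesis of Section~1, which you correctly invoke rather than derive.
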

\begin{proof} 
Let $\H$ be the ``top'' of the Hilbert Class Field Tower of $\K$. If $\H/\K$ is infinite, we are done,
so suppose $[\H:\K] < \infty$. Note that $\H$ has class number $1$ so by Corollary \ref{coro:main}, working over $\H$,
$\dim H^1(\G_S,\fq_p)=g\left(\frac{ef}{2}+1\right)$ and $\dim H^1(\G_S,\fq_p)=g-1$.
\smallskip
As  in Section \ref{section:cutting},  consider  the quotient $\Gamma$ of $\G_S$ by the normal subgroup generated by the local commutators at each $v\in S$; one has  $\binom{ef+2}{2}$ such commutators. The group $\Gamma$ can be described by $d:=g\left(\frac{ef}{2}+1\right)$ generators and by $r:=g-1+ g\frac{(ef+2)(ef+1)}{2}$ relations.
\smallskip

The Golod-Shafarevich polynomial of $\Gamma$ may be written as  $P_{\Gamma}(t)=1-d t+r t^2$, 
when assuming the worst case that all the relations are of depth $2$.  Clearly $d/2r <1$, 
and $P_{\Gamma}(d/2r)=1-\frac{d^2}{4r}$. In particular, if $P_{\Gamma}(d/2r)<0$, then one has room to cut by some large 
$p$-power of the local generators, in order to obtain at the end  some finite local groups.  For the result to follow, we thus need $4r < d^2$, or equivalently
$$4 \left(g-1+g\frac{(ef+2)(ef+1)}{2}\right) \stackrel{?}{< }\frac{g^2}4 (ef+2)^2$$
which is equivalent to 
$$16(g-1) +8g(ef+2)(ef+1) \stackrel{?}{<} g^2( ef+2)^2.$$
Replacing the $16(g-1)$ term on the left by $16g$ and dividing by $g$, and setting $x=ef$, we need
to verify
$$16+8(x+2)(x+1) \stackrel{?}{<}g(x+2)^2.$$
This holds for $g \geq 8$ and $x=ef \geq 1$.
 Proposition \ref{prop:basechange} allows us to conclude $\K_S^{[k]}/\K$ is infinite when $k$ is  sufficiently large.  
\end{proof}
\medskip
\noindent
{\em Proof Theorem~\ref{Theorem:main} }:
Recall that the principal prime $\p = (1-\zeta_{p^s})$ of $\Q(\zeta_{p^s})$ is the unique prime dividing $p$ and  by class field theory $\p$ splits  completely in the Hilbert
class field $\H$ of $\Q(\zeta_{p^s})$.  Thus if the class group has order at least $8$,  Proposition~\ref{prop:0}  applied to the solvable number field $\H$ gives the result.

\medskip

 In the proof of  \cite[Corollary 11.17]{Washington}, the class number of $\Q(\zeta_{p^r})$ is shown to be at least $10^9$ for  $\phi(p^r)=p^{r-1}(p-1) >220$.
Choosing $r\geq 9$  for {\it any} $p$ completes the proof of the Theorem. 
   
\medskip
A slightly more detailed analysis using Table \S 3   of \cite{Washington} shows the fields below suffice:
 \vskip1em

$\begin{array}{cll}
p & \K & g=h\\
p>23 & \K=\Q(\zeta_p) &  \geq 8\\
7 \leq p \leq 23 & \K=\Q(\zeta_{p^2}) & \geq 43\\
p=5 & \K=\Q(\zeta_{125}) & 57708445601\\
p=3 & \K=\Q(\zeta_{81}) & 2593 \\
p=2 & \K=\Q(\zeta_{64}) & 17
\end{array}$

\hfill   $\square$

\begin{rema}
In \cite{Hoelscher}  a proof of the Theorem for $p=2,3$ and $5$ was given. Our proof is partially modeled on the ideas there, namely considering the Hilbert class
field of a cyclotomic field. There are two cases in \cite{Hoelscher}: Case I, where the Hilbert class field tower is infinite; and Case II, where ramification is allowed
at one prime above
$p$ in the Hilbert class field $\H$ and  a $\Z/p$-extension of $\H$ ramified at exactly this prime is used. Gras has given a criterion for such an extension to exist: see 
\cite[Chapter V, Corollary 2.4.4]{gras}. Gras' criterion is not verified in \cite{Hoelscher}. Given the size of the number fields $\H$, it seems very difficult to do so.
We therefore we regard the results of \cite{Hoelscher} as incomplete.


\end{rema}


\end{document}